\title{Reparametrizations of vector fields and their shift maps}
\author{Sergiy Maksymenko}
\address{Topology dept., Institute of Mathematics of NAS of Ukraine, Te\-re\-shchenkivska st. 3, Kyiv, 01601 Ukraine}
\email{maks@imath.kiev.ua}
\keywords{Reparametrization of a flow, shift map, circle action}
\subjclass[2000]{37C10, 37C27, 37C55}
\theoremstyle{plain}
\newtheorem{theorem}[subsection]{Theorem}
\newtheorem{lemma}[subsection]{Lemma}
\newtheorem{cor}[subsection]{Corollary}
\newtheorem{rem}[subsection]{Remark}
\theoremstyle{definition}
\begin{document}

\begin{abstract}
Let $M$ be a smooth manifold, $F$ be a smooth vector field on $M$, and $(\mathbf{F}_t)$ be the local flow of $F$.
Denote by $Sh(F)$ the subset of $C^{\infty}(M,M)$ consisting of maps $h:M \to M$ of the following form:
$$h(x) = \mathbf{F}_{\alpha(x)}(x),$$
where $\alpha$ runs over all smooth functions $M\to\mathbb{R}$ which can be substituted into $\mathbf{F}$ instead of $t$.
This space often contains the identity component of the group of diffeomorphisms preserving orbits of $F$.
In this note it is shown that $Sh(F)$ is not changed under reparametrizations of $F$, that is for any smooth strictly positive function $\mu:M\to(0,+\infty)$ we have that $Sh(F)=Sh(\mu F)$.
As an application it is proved that $F$ can be reparametrized to induce a circle action on $M$ if and only if there exists a smooth function $\mu:M\to(0,+\infty)$ such that $\mathbf{F}(x,\mu(x))\equiv x$.
\end{abstract}

\maketitle

\newcommand\AFld{F}
\newcommand\AFlow{\mathbf{F}}
\newcommand\BFld{G}
\newcommand\BFlow{\mathbf{G}}
\newcommand\GFlow{\mathcal{G}}

\newcommand\funcA{\mathsf{func}(\AFld)}
\newcommand\domA{\mathsf{dom}(\AFlow)}
\newcommand\funcB{\mathsf{func}(\BFld)}
\newcommand\domB{\mathsf{dom}(\BFlow)}

\newcommand\imSh[1]{Sh(#1)}

\newcommand\ShA{\varphi}
\newcommand\imShA{\imSh{\AFld}}
\newcommand\ShB{\psi}
\newcommand\imShB{\imSh{\BFld}}

\newcommand\kerA{\ker(\ShA)}
\newcommand\kerB{\ker(\ShB)}

\newcommand\Per{\mathrm{Per}}

\newcommand\orb{o}
\newcommand\Mman{M}
\newcommand\RRR{\mathbb{R}}
\newcommand\NNN{\mathbb{N}}
\newcommand\ZZZ{\mathbb{Z}}

\newcommand\singA{\Sigma_{\AFld}}
\newcommand\singB{\Sigma_{\BFld}}

\newcommand\Uman{U}

\newcommand\afunc{\alpha}
\newcommand\bfunc{\beta}
\newcommand\mufunc{\mu}

\newcommand\ImFld[1]{\mathrm{im}(#1)}
\newcommand\ImFldk[2]{\mathrm{im}(#1,#2)}

\newcommand\ImF{\ImFld{\AFld}}
\newcommand\ImG{\ImFld{\BFld}}

\newcommand\ImFinf{\ImFldk{\AFld,\infty}}
\newcommand\ImGinf{\ImFldk{\BFld,\infty}}

\newcommand\amap{f}
\newcommand\id{\mathrm{id}}

\newcommand\sact{\Gamma}
\newcommand\ract{\BFlow}
\newcommand\fldact{\BFld}

\newcommand\contSS[2]{\mathsf{S}^{#1,#2}}
\newcommand\Sr[1]{\mathsf{S}^{#1}}

\newcommand\dif{f}
\newcommand\gdif{g}

\section{Introduction}
Let $\Mman$ be a smooth manifold and $\AFld$ be a smooth vector field on $\Mman$ tangent to $\partial\Mman$.
For each $x\in\Mman$ its \emph{integral trajectory} with respect to $\AFld$ is a unique mapping $\orb_x: \RRR\supset(a_x,b_x) \to \Mman$ such that $\orb_x(0)=x$ and $\frac{d}{dt}\orb_x = \AFld(\orb_x)$, where $(a_x,b_x) \subset\RRR$ is the maximal interval on which a map with the previous two properties can be defined.
The image of $\orb_{x}$ will be denoted by the same symbol $\orb_{x}$ and also called the \emph{orbit} of $x$.
It follows that from standard theorems in ODE the following subset of $\Mman\times\RRR$
$$
\domA = \mathop\cup\limits_{x\in\Mman} x \times (a_x, b_x),
$$
is an open, connected neighbourhood of $\Mman\times0$ in $\Mman\times\RRR$.
Then the \emph{local flow\/} of $\AFld$ is the following map
$$\AFlow:\Mman\times \RRR \supset \domA \to \Mman,
\qquad
\AFlow(x,t) = \AFlow_x(t).
$$
It is well known that if $\Mman$ is compact, or $\AFld$ has compact support, then $\AFlow$ is defined on all of $\Mman$.

Denote by $\funcA\subset C^{\infty}(\Mman,\RRR)$ the subset consisting of functions $\afunc:\Mman\to\RRR$ whose graph $\Gamma_{\afunc}=\{(x,\afunc(x)) : x\in\Mman\}$ is contained in $\domA$.
Then we can define  the following map
$$
\ShA:C^{\infty}(\Mman,\RRR) \supset \funcA \longrightarrow C^{\infty}(\Mman,\Mman),
$$
$$
\ShA(\afunc)(x)=\AFlow(x,\afunc(x)).
$$
This map will be called the \emph{shift map} along orbits of $\AFld$ and its image in $C^{\infty}(\Mman,\Mman)$ will be denoted by $\imShA$.

It is easy to see, \cite[Lm.~2]{Maks:Shifts}, that $\ShA$ is $\contSS{r}{r}$-continuous for all $r\geq0$, that is continuous between the corresponding $\Sr{r}$ Whitney topologies of $\funcA$ and $C^{\infty}(\Mman,\Mman)$.

Moreover, if the set $\singA$ of singular points of $\AFld$ is nowhere dense, then $\ShA$ is locally injective, \cite[Pr.~14]{Maks:Shifts}.
Therefore it is natural to know whether it is a homeomorphism with respect to some Whitney topologies, and, in particular, whether it is $\contSS{r}{s}$-open, i.e.\! open as a map from $\Sr{r}$ topology of $\funcA$ into $\Sr{s}$ topology of the image $\imShA$, for some $r,s\geq0$.
These problems and their applications were treated e.g. in~\cite{Maks:Shifts,Maks:AGAG:2006,Maks:LocInvSh}.

In this note we prove the following theorems describing the behaviour of the image of shift maps under reparametrizations and pushforwards.

\begin{theorem}\label{th:image_of_shift_map}
Let $\mufunc:\Mman\to\RRR$ be any smooth function and $\BFld=\mufunc\AFld$ be the vector field obtained by the multiplication $\AFld$ by $\mufunc$.
Then
\begin{equation}\label{equ:imShB_in_imShA}
\imSh{\BFld} \subset \imShA.
\end{equation}

Suppose that $\mufunc\not=0$ on all of $\Mman$.
Then $$\imSh{\mufunc\AFld} = \imShA.$$

In this case the shift mapping $\ShA:\funcA\to\imShA$ of $\AFld$ is $\contSS{r}{s}$-open for some $r,s\geq0$, if and only if so is the shift mapping $\ShB:\funcB\to\imShB$ of $\BFld$.
\end{theorem}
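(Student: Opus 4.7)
The approach is to exploit the identity between the unparametrized orbits of $\AFld$ and $\BFld=\mufunc\AFld$: if $\gamma(t)=\BFlow(x,t)$, then by the standard time-change formula for reparametrizations of ODEs one has $\gamma(t)=\AFlow(x,\tau_x(t))$ with $\tau_x(t)=\int_{0}^{t}\mufunc(\gamma(s))\,ds$, as can be checked by differentiating and invoking uniqueness for the $\BFld$-flow. This is the key identity, and all three assertions of the theorem reduce to manipulations of it.

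To establish $\imSh{\mufunc\AFld}\subset\imShA$, I would take $h=\ShB(\bfunc)$ with $\bfunc\in\funcB$ and set
\[
\afunc(x) \;=\; \int_{0}^{\bfunc(x)}\mufunc(\BFlow(x,s))\,ds.
\]
Smoothness of $\afunc$ follows from standard smooth dependence of flows on parameters, and the reparametrization identity gives $\AFlow(x,\afunc(x))=\BFlow(x,\bfunc(x))=h(x)$. To check $\afunc\in\funcA$, one notes that $\tau_x([0,\bfunc(x)])$ is a compact interval containing both $0$ and $\afunc(x)$, and for every $\sigma=\tau_x(s)$ in it the value $\AFlow(x,\sigma)=\BFlow(x,s)$ is defined; hence $[0,\afunc(x)]\subset(a_x,b_x)$, so $h\in\imShA$.

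When $\mufunc$ vanishes nowhere, the preceding inclusion applied to the reparametrization $\AFld=(1/\mufunc)\BFld$ yields $\imShA\subset\imShB$, and hence equality. For the openness statement, the formula above is really a map $T:\funcB\to\funcA$, $T(\bfunc)(x)=\int_{0}^{\bfunc(x)}\mufunc(\BFlow(x,s))\,ds$, satisfying $\ShA\circ T=\ShB$, with a symmetric inverse $T^{-1}:\funcA\to\funcB$. The crucial observation is the factorization $T(\bfunc)(x)=\Phi(x,\bfunc(x))$ for the fixed smooth function $\Phi(x,t)=\int_{0}^{t}\mufunc(\BFlow(x,s))\,ds$, and likewise for $T^{-1}$; composition with a fixed smooth map being $\Sr{r}$-continuous, $T$ is a homeomorphism in Whitney $\Sr{r}$-topology for every $r\geq 0$. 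Since $\imShA=\imShB$ inherits a single subspace $\Sr{s}$-topology from $C^{\infty}(\Mman,\Mman)$, the factorization $\ShB=\ShA\circ T$ immediately makes $\contSS{r}{s}$-openness of $\ShA$ equivalent to that of $\ShB$.

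The only slightly technical point I anticipate is the $\Sr{r}$-continuity of $T$ and $T^{-1}$: one must estimate $r$-jets of the parameter integral $\int_{0}^{\bfunc(x)}\mufunc(\BFlow(x,s))\,ds$ in terms of $r$-jets of $\bfunc$. This is routine given the standard $\Sr{r}$-continuity of composition with smooth maps and smoothness of $\BFlow$ on $\domB$, and it is the only step where one has to look beyond the purely geometric orbit-reparametrization identity.
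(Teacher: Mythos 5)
Your proposal is correct and follows essentially the same route as the paper: the same time-change identity $\BFlow(x,s)=\AFlow\bigl(x,\int_0^s\mufunc(\BFlow(x,\tau))\,d\tau\bigr)$ proved via uniqueness of the flow, the same symmetric application to $\AFld=(1/\mufunc)\BFld$ for equality of images, and the same substitution homeomorphism $T$ (the paper's $\xi$) with the reciprocal integral as inverse for the openness equivalence. Your extra checks (that $\afunc\in\funcA$ via the image of the compact interval, and the $\Sr{r}$-continuity of $T$) are details the paper leaves implicit, not deviations in method.
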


\begin{theorem}\label{th:induced_vf}
Let $z\in\Mman$, $\afunc:(\Mman,z)\to\RRR$ be a germ of smooth function at $z$, and $\dif:\Mman\to\Mman$ be a germ of smooth map defined by $\dif(x)=\AFlow(x,\afunc(x))$.
Suppose that $\dif$ is a germ of diffeomorphism at $z$.
Then
\begin{equation}\label{equ:hF_1FaF}
\dif_{*}\AFld=(1+\AFld(\afunc))\cdot \AFld,
\end{equation}
where $\dif_{*}\AFld = T\dif\circ\AFld\circ\dif^{-1}$ is the vector field induced by $\dif$, and $\AFld(\afunc)$ is the derivative of $\afunc$ along $\AFld$.
Thus $\dif_{*}\AFld$ is just a reparametrization of $\AFld$.

If $\afunc:\Mman\to\RRR$ is defined on all of $\Mman$ and $\dif=\ShA(\afunc)$ is a diffeomorphism of $\Mman$, then
$$\imSh{\dif_{*}\AFld} = \imShA.$$
\end{theorem}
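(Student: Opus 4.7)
The first step is a direct chain-rule computation of $T\dif(x)\cdot\AFld(x)$. Writing $\dif$ as the composition of $x\mapsto(x,\afunc(x))$ with $\AFlow$, one obtains
$$T\dif(x)\cdot v = T_{x}\AFlow_{\afunc(x)}(v) + \AFld(\dif(x))\cdot d\afunc(x)(v)$$
for $v\in T_{x}\Mman$. Substituting $v=\AFld(x)$, the first summand becomes $\AFld(\dif(x))$ by the standard flow-invariance identity $T_{x}\AFlow_{t}(\AFld(x)) = \AFld(\AFlow_{t}(x))$ (got by differentiating $\AFlow_{t}\circ\AFlow_{s}=\AFlow_{t+s}$ in $s$ at $0$), and the second is $\AFld(\dif(x))\cdot\AFld(\afunc)(x)$ by definition of the derivative of $\afunc$ along $\AFld$. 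Hence
$$T\dif(x)\cdot\AFld(x) = \bigl(1+\AFld(\afunc)(x)\bigr)\AFld(\dif(x)),$$
which is exactly the pointwise form of~\eqref{equ:hF_1FaF}, exhibiting $\dif_{*}\AFld$ as $\mufunc\cdot\AFld$ with $\mufunc(y) = 1+\AFld(\afunc)(\dif^{-1}(y))$.

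For the second assertion I would invoke Theorem~\ref{th:image_of_shift_map}, which will give $\imSh{\dif_{*}\AFld} = \imSh{\mufunc\AFld} = \imShA$ as soon as $\mufunc$ is nowhere zero on $\Mman$, equivalently as soon as $1+\AFld(\afunc)$ is nowhere zero on $\Mman$. I would verify this in two cases. At a regular point $x\notin\singA$, invertibility of $T\dif(x)$ together with $\AFld(x)\neq 0$ forces $T\dif(x)\cdot\AFld(x)\neq 0$, and the displayed identity above then forces $1+\AFld(\afunc)(x)\neq 0$. At a singular point $x_{0}\in\singA$ we have $\AFld(x_{0}) = 0$, hence $\AFld(\afunc)(x_{0}) = d\afunc(x_{0})\cdot\AFld(x_{0}) = 0$, and therefore $1+\AFld(\afunc)(x_{0}) = 1$. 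Thus $\mufunc$ is nowhere zero and the conclusion follows from Theorem~\ref{th:image_of_shift_map}.

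The chain-rule computation is essentially mechanical once flow-invariance of $\AFld$ is recalled, so the only place a genuine obstruction could arise is the nowhere-vanishing of $1+\AFld(\afunc)$ at singular points, where invertibility of $T\dif$ provides no information about the scalar multiplier. This potential obstacle dissolves immediately from the observation that $\AFld(\afunc)$ vanishes identically on $\singA$, which makes the second assertion reduce cleanly to Theorem~\ref{th:image_of_shift_map} with no further work.
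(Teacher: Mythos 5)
Your proof is correct, and it takes a genuinely different route from the paper's. For the identity~\eqref{equ:hF_1FaF} the paper first normalizes to $\afunc(z)=0$ by composing with $\AFlow_{-\afunc(z)}$, disposes of singular points by observing that both sides vanish there, and then computes the Jacobian of $\dif$ in a flow-box chart where $\AFld=\frac{\partial}{\partial x_1}$; your coordinate-free chain rule applied to $\dif$ as the composition of $x\mapsto(x,\afunc(x))$ with $\AFlow$ treats all points uniformly and avoids both the normalization and the regular/singular case split. Your version is also slightly more careful than the paper's on one point: writing $\mufunc(y)=1+\AFld(\afunc)(\dif^{-1}(y))$ records the correct evaluation point of the multiplier, which the paper's display and its matrix computation leave implicit; this affects none of the conclusions, since $1+\AFld(\afunc)$ is nowhere zero iff its composition with the diffeomorphism $\dif^{-1}$ is. For the global statement the paper simply cites \cite{Maks:Shifts} for the nonvanishing of $1+\AFld(\afunc)$, whereas you prove it on the spot: invertibility of $T\dif$ forces it at regular points, and $\AFld(\afunc)=d\afunc(\AFld)$ vanishes identically on $\singA$, so the multiplier equals $1$ there; both arguments then conclude via Theorem~\ref{th:image_of_shift_map}. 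What your approach buys is self-containedness and a cleaner, chart-free derivation; what the paper's buys is an explicit local normal form for $\dif$ along a flow box, at the cost of an external reference for the nonvanishing claim.
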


Further in \S\ref{sect:periodic_sh_maps} we will apply these results to circle actions.
In particular, we prove that $\AFld$ can be reparametrized to induce a circle action on $\Mman$ if and only if there exists a smooth function $\mu:\Mman\to(0,+\infty)$ such that $\AFlow(x,\mu(x))\equiv x$, see Corollary~\ref{cor:persh--cicrleact}.

\section{Proofs of Theorems~\ref{th:image_of_shift_map} and~\ref{th:induced_vf}}
These theorems are based on the following well-known statement, see e.g.~\cite{Totoki:MFCKUS:1966,Parry:JLMS:1972,OrnsteinSmorodsky:IJM:1978} for its variants in the category of measurable maps.
\begin{lemma}\label{lm:reparam}
Let $\BFld=\mufunc\AFld$ and $\BFlow:\domB \to \Mman$ be the local flow of $\BFld$.
Then there exists a smooth function $\afunc:\domB\to\RRR$ such that 
$$
\BFlow(x,s)=\AFlow(x,\afunc(x,s)).
$$
In fact,
\begin{equation}\label{equ:afunc-def}
\afunc(x,s)=\int\limits_{0}^{s} \mufunc(\BFlow(x,\tau)) d\tau.
\end{equation}
In particular, for each $\gamma\in\funcB$ we have that 
\begin{equation}\label{equ:Gg_Fag}
\BFlow(x,\gamma(x))=\AFlow(x,\afunc(x,\gamma(x))),
\end{equation}
whence $\imShB\subset\imShA$.
\end{lemma}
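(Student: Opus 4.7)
\medskip
\noindent\textbf{Proof plan.}
The idea is to take formula~\eqref{equ:afunc-def} as the definition of $\afunc$ and then verify the identity $\BFlow(x,s)=\AFlow(x,\afunc(x,s))$ by an ODE uniqueness argument. Smoothness of $\afunc$ on $\domB$ is immediate from the smoothness of $\BFlow$ and differentiation under the integral sign, and clearly $\afunc(x,0)=0$, so we get $\AFlow(x,\afunc(x,0))=x=\BFlow(x,0)$.

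Fix $x\in\Mman$ and let $I_x\subset\RRR$ be the maximal interval with $\{x\}\times I_x\subset\domB$. Set $g(s)=\AFlow(x,\afunc(x,s))$ on the (open) subset of $I_x$ where this makes sense. The chain rule together with $\partial_s\afunc(x,s)=\mufunc(\BFlow(x,s))$ gives
\begin{equation*}
g'(s)=\AFld(g(s))\cdot\partial_s\afunc(x,s)=\mufunc(\BFlow(x,s))\cdot\AFld(g(s)),\qquad g(0)=x.
\end{equation*}
On the other hand, since $\BFld=\mufunc\AFld$, the curve $s\mapsto\BFlow(x,s)$ satisfies
\begin{equation*}
\tfrac{d}{ds}\BFlow(x,s)=\BFld(\BFlow(x,s))=\mufunc(\BFlow(x,s))\cdot\AFld(\BFlow(x,s)),\qquad \BFlow(x,0)=x.
\end{equation*}
Thus both $g$ and $\BFlow(x,\cdot)$ solve the \emph{same} non-autonomous ODE $u'(s)=c(s)\,\AFld(u)$, with the prescribed coefficient $c(s):=\mufunc(\BFlow(x,s))$ and common initial value $x$. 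Uniqueness for such ODEs forces $g(s)=\BFlow(x,s)$ wherever both are defined.

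The only technical point is extending this equality to all of $I_x$. I would argue by connectedness: the set
\[
S=\{s\in I_x : (x,\afunc(x,s))\in\domA\ \text{and}\ \AFlow(x,\afunc(x,s))=\BFlow(x,s)\}
\]
contains $0$, is closed in $I_x$ by continuity, and is open by the local ODE argument above applied at any base point of $S$; hence $S=I_x$. Running this for every $x$ yields the global identity $\BFlow(x,s)=\AFlow(x,\afunc(x,s))$ on $\domB$. I expect this domain bookkeeping to be the only even slightly delicate step; the rest is computational.

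Finally, for $\gamma\in\funcB$, substitute $s=\gamma(x)$ into the just-proved identity to obtain~\eqref{equ:Gg_Fag}. The composed function $\bar\afunc(x):=\afunc(x,\gamma(x))$ is smooth, and the fact that $\AFlow(x,\bar\afunc(x))=\BFlow(x,\gamma(x))$ is defined for every $x$ shows $\Gamma_{\bar\afunc}\subset\domA$, i.e.\ $\bar\afunc\in\funcA$. Hence $\ShB(\gamma)=\ShA(\bar\afunc)$, so $\imShB\subset\imShA$, completing the lemma.
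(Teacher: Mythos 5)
Your proposal is correct and follows essentially the same route as the paper: take the integral formula~\eqref{equ:afunc-def} as the definition of $\afunc$ and identify $\AFlow(x,\afunc(x,s))$ with $\BFlow(x,s)$ by an ODE uniqueness argument. You are in fact somewhat more careful than the printed proof, which only verifies the $s$-derivative at $s=0$ and leaves the precise uniqueness statement and the domain bookkeeping implicit, whereas you exhibit the common non-autonomous equation $u'(s)=\mufunc(\BFlow(x,s))\,\AFld(u)$ satisfied by both curves for all $s$ and close the argument by connectedness of the maximal interval.
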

\begin{proof}
Put $\GFlow(x,s) = \AFlow(x,\afunc(x,s))$, where $\afunc$ is defined by~\eqref{equ:afunc-def}.
We have to show that $\BFlow=\GFlow$.

Notice that a flow $\BFlow$ of a vector field $\BFld$ is a \emph{unique\/} mapping that satisfies the following ODE with initial condition:
$$
\left.\frac{\partial\BFlow(x,s)}{\partial s}\right|_{s=0} = \BFld(x) = \AFld(x) \mufunc(x), \qquad \BFlow(x,0)=x.
$$
Notice that 
$$
\afunc(x,0)=0,
\qquad
\afunc'_{s}(x,0)=\mu(\BFlow(x,0))=\mu(x).
$$
In particular, $\GFlow(x,0) = \AFlow(x,\afunc(x,0)) = x$.
Therefore it remains to verify that 
\begin{equation}\label{equ:dGFlow}
\left.\frac{\partial\GFlow(x,s)}{\partial s}\right|_{s=0} = \AFld(x) \cdot \mufunc(x).
\end{equation}
We have:
\begin{equation}\label{equ:dGFlow_1} 
\frac{\partial\GFlow}{\partial s}(x,s) = 
\frac{\partial\AFlow}{\partial s}(x,\afunc(x,s)) = 
\left.\frac{\partial\AFlow(x,t)}{\partial t}\right|_{t=\afunc(x,s)} \cdot  \afunc'_{s}(x,s).
\end{equation}
Substituting $s=0$ in~\eqref{equ:dGFlow_1} we get~\eqref{equ:dGFlow}.
\end{proof}

\subsection*{Proof of Theorem~\ref{th:image_of_shift_map}}
Eq.~\eqref{equ:imShB_in_imShA} is established in Lemma~\ref{lm:reparam}.

Suppose that $\mu\not=0$ on all of $\Mman$.
Then $\AFld=\frac{1}{\mu}\BFld$, and $\frac{1}{\mu}$ is smooth on all of $\Mman$.
Hence again by Lemma~\ref{lm:reparam} $\imShA\subset\imShB$, and thus $\imShA=\imShB$.

To prove the last statement define a map $\xi:\funcB\to\funcA$ by
$$
\xi(\gamma)(x)=
\afunc(x,\gamma(x))=\int\limits_{0}^{s} \mufunc(\BFlow(x,\tau)) d\tau,
\qquad \gamma\in\funcB.
$$
Then~\eqref{equ:Gg_Fag} means that the following diagram is commutative:
$$
\begin{CD}
\funcB @>{\xi}>>  \funcA \\
@V{\ShB}VV @VV{\ShA}V  \\
\imShB @=   \imShA 
\end{CD}
$$ 
We claim that $\xi$ is a homeomorphism with respect to $\Sr{r}$ topologies for all $r\geq0$.
Indeed, evidently $\xi$ is $\contSS{r}{r}$-continuous.
Put 
\begin{equation}\label{equ:beta_xi_inv}
\bfunc(x,s)=\int\limits_{0}^{s} \frac{d\tau}{\mufunc(\AFlow(x,\tau))}.
\end{equation}
Then the inverse map $\xi^{-1}:\funcA\to\funcB$ is given by 
\begin{equation}\label{equ:xi_inv}
\xi^{-1}(\delta)(x)=\bfunc(x,\delta(x))=\int\limits_{0}^{\delta(x)} \frac{d\tau}{\mufunc(\AFlow(x,\tau))},
\qquad \delta\in\funcA,
\end{equation}
and is also $\contSS{r}{r}$-continuous.
Hence $\ShB$ is $\contSS{r}{s}$-open iff so is $\ShA$.
Theorem~\ref{th:image_of_shift_map} is completed.

\subsection*{Proof of Theorem~\ref{th:induced_vf}.}
First we reduce the situation to the case $\afunc(z)=0$.
Suppose that $a=\afunc(z)\not=0$ and let $\bfunc(x)=\afunc(x)-a$.
Define the following germ of diffeomorphisms $\gdif=\AFlow_{-a}\circ \dif$ at $z$:
$$
\gdif(x)=
\AFlow(\AFlow(x,\afunc(x)),-a) =\AFlow(x,\afunc(x)-a)=
\AFlow(x,\bfunc(x)).
$$
Then $\gdif(z)=z$, and $\bfunc(z)=0$.

Since $\AFlow$ preserves $\AFld$, i.e. $(\AFlow_t)_{*}\AFld=\AFld$ for all $t\in\RRR$, we obtain that
$$\dif_{*}\AFld = \dif_{*}(\AFlow_{-a})_{*}\AFld = (\dif\circ\AFlow_{-a})_{*}\AFld=\gdif_{*}\AFld.$$
Moreover, $\AFld(\afunc)=\AFld(\bfunc)$.
Therefore it suffices to prove our statement for $\gdif$.

If $z$ is a singular point of $\AFld$, i.e. $\AFld=0$, then both parts of~\eqref{equ:hF_1FaF} vanish.
Therefore we can assume that $z$ is a regular point of $\AFld$.
Then there are local coordinates $(x_1,\ldots,x_n)$ at $z=0\in\RRR^n$ in which $\AFld(x)=\frac{\partial}{\partial x_1}$ and $$\AFlow(x_1,\ldots,x_n,t)=(x_1+t, x_2,\ldots,x_n).$$
Then $\gdif(x_1,\ldots,x_n)=(x_1+\bfunc(x), x_2,\ldots,x_n)$, whence
\begin{multline*}
T\gdif\circ \AFld \circ \gdif^{-1}= 
\left(
\begin{matrix}
1+\bfunc'_{x_1} & \bfunc'_{x_2} & \cdots & \bfunc'_{x_n} \\
         0      &     1         &   0    &  0             \\
\cdots          &   \cdots      &  \cdots & \cdots         \\
         0      &     0         &   0    &  1
\end{matrix}
\right)
\left(
\begin{matrix}
\frac{\partial}{\partial x_1} \\ 0 \\ \cdots \\ 0
\end{matrix}
\right)
= \\ =
(1+\bfunc'_{x_1})\AFld = (1+\AFld(\bfunc))\AFld.
\end{multline*}

Suppose now that $\afunc$ is defined on all of $\Mman$ and $\dif$ is a diffeomorphism of all of $\Mman$.
Then by~\cite{Maks:Shifts} the function $\mu=1+\AFld(\afunc)\not=0$ on all of $\Mman$, whence by Theorem~\ref{th:image_of_shift_map} $\imSh{\mu\AFld}=\imShA$.

\section{Periodic shift maps}\label{sect:periodic_sh_maps}
Let $\AFld$ be a vector field, and $\ShA$ be its shift map.
The set $$\kerA=\ShA^{-1}(\id_{\Mman})$$ will be called the \emph{kernel} of $\ShA$, thus $\AFlow(x,\nu(x))\equiv x$ for all $\nu\in\kerA$.
Evidently, $0\in\kerA$.
Moreover, it is shown in~\cite[Lm.~5]{Maks:Shifts} that $\ShA(\afunc)=\ShA(\bfunc)$ iff $\afunc-\bfunc\in\funcA$.

Suppose that the set $\singA$ of singular points of $\AFld$ is nowhere dense in $\Mman$.
Then, \cite[Th.~12 \& Pr.~14]{Maks:Shifts}, $\ShA$ is a locally injective map with respect to any weak or strong topologies, and we have the following two possibilities for $\kerA$:

a){\bf~Non-periodic case:} $\kerA=\{0\}$, so $\ShA:\funcA\to\imShA$ is a bijection.

b){\bf~Periodic case:} there exists a smooth strictly positive function $$\theta:\Mman\to(0,+\infty)$$ such that $\AFlow(x,\theta(x))\equiv x$ and  $\kerA=\{n\theta\}_{n \in \ZZZ}$.

In this case $\funcA=C^{\infty}(\Mman,\RRR)$, $\ShA$ yields a bijection between $C^{\infty}(\Mman,\RRR)/\kerA$ and $\imShA$, and for every $\afunc\in C^{\infty}(\Mman,\RRR)$ we have that $$\ShA^{-1}\circ\ShA(\afunc) = \afunc + \kerA = \{\afunc + k\theta\}_{k\in\ZZZ}.$$
It also follows that every non-singular point $x$ of $\AFld$ is periodic of some period $\Per(x)$, $$\theta(x)=n_{x}\Per(x)$$ for some $n_{x}\in\NNN$, and in particular, $\theta$ is constant along orbits of $\AFld$.
We will call $\theta$ the \emph{period function} for $\ShA$.

\begin{lemma}\label{lm:change_period}
Suppose that the shift map $\ShA$ of $\AFld$ is periodic and let $\theta$ be its period function.
Let also $\mu:\Mman\to(0,+\infty)$ be any smooth strictly positive function.
Put $\BFld=\mu\AFld$.
Then the shift map $\ShB$ of $\BFld$ is also periodic, and its period function is 
\begin{equation}\label{equ:theta_prime_general}
\bar\theta(x) \;\stackrel{\eqref{equ:xi_inv}}{=\!=}\; \xi^{-1}(\theta)(x) \;=\; \beta(x,\theta(x))  \;=\;
\int\limits_{0}^{\theta(x)} \frac{d\tau}{\mufunc(\AFlow(x,\tau))}.
\end{equation}
If $\mu$ is constant along orbits of $\AFld$, then the last formula reduces to the following one:
\begin{equation}\label{equ:theta_prime_const}
\bar\theta=\frac{\theta}{\mu}.
\end{equation}
In particular, for the vector field $\BFld=\theta\AFld$ its period function is equal to $\bar\theta\equiv1$.
\end{lemma}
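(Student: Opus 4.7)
The plan is to exploit Theorem~\ref{th:image_of_shift_map}, which furnishes a homeomorphism $\xi:\funcB\to\funcA$ fitting into the commutative diagram $\ShB=\ShA\circ\xi$. Consequently the kernels are related by $\kerB=\xi^{-1}(\kerA)$, and the task reduces to writing out this preimage explicitly via the formula~\eqref{equ:xi_inv} for $\xi^{-1}$.

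First I would observe that $\gamma\in\kerB$ precisely when $\xi(\gamma)\in\kerA=\{n\theta\}_{n\in\ZZZ}$, so $\kerB$ consists of the functions $\xi^{-1}(n\theta)$ for $n\in\ZZZ$. Plugging $n\theta$ into~\eqref{equ:xi_inv} gives
$$\xi^{-1}(n\theta)(x)=\int_{0}^{n\theta(x)}\frac{d\tau}{\mufunc(\AFlow(x,\tau))}.$$
The key observation is that, since $\AFlow(x,\theta(x))=x$, the trajectory $\tau\mapsto\AFlow(x,\tau)$ is $\theta(x)$-periodic, and hence the integrand $1/\mufunc(\AFlow(x,\tau))$ is a $\theta(x)$-periodic function of $\tau$. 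Therefore the integral over $n$ periods is $n$ times the integral over one period, which gives $\xi^{-1}(n\theta)=n\bar\theta$ with $\bar\theta$ as defined in~\eqref{equ:theta_prime_general}.

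Since $1/\mufunc>0$ and $\theta>0$, the function $\bar\theta:\Mman\to(0,+\infty)$ is smooth and strictly positive, so the identification $\kerB=\{n\bar\theta\}_{n\in\ZZZ}$ confirms that $\ShB$ is periodic with period function $\bar\theta$. For the simplification~\eqref{equ:theta_prime_const}, if $\mufunc$ is constant along orbits of $\AFld$ then $\mufunc(\AFlow(x,\tau))=\mufunc(x)$ is independent of $\tau$, so the integral collapses to $\theta(x)/\mufunc(x)$. Finally, for $\BFld=\theta\AFld$ the multiplier $\theta$ itself is constant along orbits of $\AFld$ (as recalled in the discussion of the periodic case preceding the lemma), whence~\eqref{equ:theta_prime_const} gives $\bar\theta\equiv 1$.

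The only step that requires genuine care is the identity $\xi^{-1}(n\theta)=n\,\xi^{-1}(\theta)$: this is \emph{not} a formal consequence of $\xi^{-1}$ being a map, but rather a direct use of the periodicity of the $\AFlow$-orbits. Everything else is bookkeeping built on Theorem~\ref{th:image_of_shift_map}.
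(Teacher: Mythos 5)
Your proof is correct and rests on the same foundation as the paper's: the correspondence $\xi$ from the proof of Theorem~\ref{th:image_of_shift_map} together with the explicit formula~\eqref{equ:xi_inv}. The paper verifies directly that $\BFlow(x,\bar\theta(x))=\BFlow(x,\bfunc(x,\theta(x)))=\AFlow(x,\theta(x))=x$, and then disposes of the identification of $\bar\theta$ as \emph{the} period function in one sentence, asserting that minimality of $\theta$ together with $\mu>0$ forces minimality of $\bar\theta$. Where you genuinely differ is in that second step: instead of a minimality remark you compute the whole kernel, $\kerB=\xi^{-1}(\kerA)=\{\xi^{-1}(n\theta)\}_{n\in\ZZZ}$, and establish $\xi^{-1}(n\theta)=n\bar\theta$ by exploiting the $\theta(x)$-periodicity of $\tau\mapsto\mufunc(\AFlow(x,\tau))$. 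This yields exactly the kernel structure $\kerB=\{n\bar\theta\}_{n\in\ZZZ}$ demanded by the paper's definition of the periodic case, and it correctly isolates the one point that is not formal bookkeeping --- that $\xi^{-1}$ carries $n\theta$ to $n\,\xi^{-1}(\theta)$ only because the orbits are periodic --- which the paper's terse minimality argument leaves implicit. Your treatment of~\eqref{equ:theta_prime_const} and of the special case $\BFld=\theta\AFld$ (using that $\theta$ is constant along orbits) coincides with the paper's.
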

\begin{proof}
Let $\BFlow:\Mman\times\RRR\to\Mman$ be the flow of $\BFld$.
We have to show that $\BFlow(x,\bar\theta(x))\equiv x$ for all $x\in\Mman$:
\begin{equation}\label{equ:x_BFxt}
\BFlow(x,\bar\theta(x)) \stackrel{\eqref{equ:theta_prime_general}}{=\!=\!=} 
\BFlow\bigl(x,\bfunc(x,\theta(x))\bigr) = 
\AFlow(x,\theta(x)) \equiv x.
\end{equation}
Since $\theta$ is the \emph{minimal} positive function for which $\AFlow(x,\theta(x))\equiv x$ and $\mu>0$, it follows from~\eqref{equ:theta_prime_general} that so is $\bar\theta$ is also the minimal positive function for which~\eqref{equ:x_BFxt} holds true.
Hence $\bar\theta$ is the period function for the shift map of $\BFld$.

Let us prove~\eqref{equ:theta_prime_const}.
Since $\mu$ is constant along orbits of $\AFld$, we have that $\mu(\AFlow(x,\tau))=\mu(x)$, whence
$$
\bar\theta(x)=\bfunc(x,\theta(x)) = 
\int\limits_{0}^{\theta(x)} \frac{d\tau}{\mu(\AFlow(x,\tau))}  =
\int\limits_{0}^{\theta(x)} \frac{d\tau}{\mu(x)} = \frac{\theta(x)}{\mu(x)}.
$$
Lemma is proved.
\end{proof}

\subsection{Circle actions}
Regard $S^1$ as the group $U(1)$ of complex numbers with norm $1$, and let $\exp:\RRR\to S^1$ be the exponential map defined by $\exp(t)=e^{2\pi i t}$.

Let $\sact:\Mman\times S^1\to \Mman$ be a smooth action of $S^1$ on $\Mman$.
Then it yields a smooth $\RRR$-cation (or a flow) $\ract:\Mman\times\RRR\to\Mman$ given by 
\begin{equation}\label{equ:ract}
\ract(x,t) = \sact(x,\exp(t)).
\end{equation}
Moreover $\ract$ is generated by the following vector field
$$
\fldact(x) = \left.\frac{\partial\ract(x,t)}{\partial t}\right|_{t=0}.
$$
Evidently, any of $\sact$, $\ract$, and $\fldact$ determines two others.
In particular, a flow $\ract$ on $\Mman$ is of the form~\eqref{equ:ract} for some smooth circle action $\sact$ on $\Mman$ if and only if $\ract_{1}=\id_{\Mman}$, i.e. $\ract(x,1)\equiv x$ for all $x\in\Mman$.

In other words, the shift map of $\ract$ is periodic and its period function is the constant function $\theta\equiv 1$.

As a consequence of Lemma~\ref{lm:change_period} we get the following:
\begin{cor}\label{cor:persh--cicrleact}
Let $\AFld$ be a smooth vector field on $\Mman$ and $$\theta:\Mman\to(0,+\infty)$$ be a smooth strictly positive function.
Then the following conditions are equivalent:
\begin{enumerate}
 \item[\rm(a)] 
the vector field $\BFld=\theta\AFld$ yields a smooth circle action, i.e. $\BFlow(x,1)=x$ for all $x\in\Mman$;
 \item[\rm(b)] 
the shift map $\ShA$ of $\AFld$ is periodic and $\theta$ is its period function, i.e. $\AFlow(x,\theta(x))\equiv x$ for all $x\in\Mman$.
\end{enumerate}
\end{cor}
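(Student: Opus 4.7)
My plan is to deduce Corollary~\ref{cor:persh--cicrleact} directly from Lemma~\ref{lm:change_period} by applying it once in each direction.

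For (b) $\Rightarrow$ (a) I would simply quote the final ``in particular'' clause of Lemma~\ref{lm:change_period} with $\mufunc = \theta$: since $\theta$ is the period function of $\ShA$, it is constant along orbits of $\AFld$, and formula~\eqref{equ:theta_prime_const} gives the period function of $\BFld = \theta\AFld$ as $\bar\theta \equiv 1$; in particular $\BFlow(x,1) \equiv x$, which is (a).

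For (a) $\Rightarrow$ (b) I would reverse the roles of $\AFld$ and $\BFld$. Under the hypothesis $\BFlow(x,1) \equiv x$ the shift map $\ShB$ of $\BFld$ is periodic with period function identically $1$; applying Lemma~\ref{lm:change_period} to the base vector field $\BFld$ with multiplier $\mufunc = 1/\theta$ (so that $\mufunc\,\BFld = \AFld$), formula~\eqref{equ:theta_prime_const} yields the period function of $\AFld$ as $1/(1/\theta) = \theta$, which is (b).

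The key conceptual point, and the only place some care is warranted, is the symmetric nature of Lemma~\ref{lm:change_period}: the roles of ``base vector field with known period function'' and ``reparametrized field'' can be interchanged, with the constant $1$ in the second direction playing the part played by $\theta$ in the first. Both implications then reduce to trivial arithmetic ($\theta/\theta = 1$ and $1/(1/\theta) = \theta$) via formula~\eqref{equ:theta_prime_const}, and no further computation beyond invoking the lemma is required.
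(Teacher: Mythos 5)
Your direction (b)$\Rightarrow$(a) is correct and is precisely the intended reading of the paper's one-line justification (``as a consequence of Lemma~\ref{lm:change_period}''): once $\theta$ is known to be the period function of $\ShA$, the discussion preceding that lemma guarantees that $\theta$ is constant along orbits of $\AFld$, so formula~\eqref{equ:theta_prime_const} applies with $\mufunc=\theta$ and gives $\bar\theta=\theta/\theta\equiv1$, i.e. $\BFlow(x,1)\equiv x$. Since the paper supplies no further proof, there is nothing to compare beyond this, and on this half you and the paper agree.

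The direction (a)$\Rightarrow$(b) has a genuine gap, concentrated in your appeal to~\eqref{equ:theta_prime_const}. First, a minor point: $\BFlow(x,1)\equiv x$ only says that the constant function $1$ lies in $\kerB$, so the period function of $\BFld$ is a priori $1/n$ for some integer $n\geq1$, not necessarily $1$. Second, and decisively: formula~\eqref{equ:theta_prime_const} is valid only when the multiplier is constant along orbits, and in this direction the constancy of $1/\theta$ along orbits of $\BFld$ is not a hypothesis — it is something that would have to be proved, and it can fail. Without it one must use the general formula~\eqref{equ:theta_prime_general}, which returns the orbit average $\int_0^1\theta(\BFlow(x,\tau))\,d\tau$ rather than $\theta(x)$. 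Concretely, take $\Mman=S^1=\RRR/\ZZZ$, $\BFld=\partial/\partial x$ (so $\BFlow(x,1)\equiv x$ and (a) holds), $\theta(x)=1+\tfrac12\sin(2\pi x)$, and $\AFld=\tfrac{1}{\theta}\BFld$, so that $\BFld=\theta\AFld$. Every orbit of $\AFld$ is the whole circle with period $\int_0^1\theta\,dx=1$, hence $\AFlow(x,s)=x$ iff $s\in\ZZZ$, while $\theta(x)\notin\ZZZ$ for all but four points; thus $\AFlow(x,\theta(x))\not\equiv x$ and (b) fails. So the implication (a)$\Rightarrow$(b) requires the additional assumption that $\theta$ is constant along orbits of $\AFld$ (or the replacement of $\theta$ in (b) by its orbit average); this is as much a defect of the statement as of your argument, but the step where you invoke~\eqref{equ:theta_prime_const} for the multiplier $1/\theta$ is exactly where your proof breaks, and it cannot be repaired as written.
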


%
%

\begin{cor}\label{cor:j1_of_per_map}
Suppose that the shift map $\ShA$ of $\AFld$ is periodic and let $z\in\Mman$ be a singular point of $\AFld$.
Then there are $k,l\geq0$ such that $2k+l=\dim\Mman$, non-zero numbers $A_1,\ldots,A_k\in\RRR\setminus\{0\}$, local coordinates $(x_1,y_1,\ldots,x_k,y_k,t_1,\ldots,t_l)$ at $z=0\in\RRR^{2k+l}$, and in which the linear part of $\AFld$ at $0$ is given by
$$
\begin{array}{rcl}
j^1_{0}\AFld(x_1,y_1,\ldots,x_k,y_k,t_1,\ldots,t_l) &\!\!\!=&\!\!\! 
- A_1 y_1 \dfrac{\partial }{\partial x_1} + A_1 x_1\dfrac{\partial }{\partial y_1} + \cdots \\ [3mm]
&\!\!\! &\!\!\! -A_k y_k \dfrac{\partial }{\partial x_k} + A_k x_k\dfrac{\partial }{\partial y_k}.
\end{array}
$$
\end{cor}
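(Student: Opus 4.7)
The plan is to reduce the statement to the classification of linear $S^1$-representations. First I would invoke Corollary~\ref{cor:persh--cicrleact} and replace $\AFld$ by its reparametrization $\BFld=\theta\AFld$, whose flow $\BFlow$ descends to a smooth circle action $\sact:\Mman\times S^1\to\Mman$. Since $\theta(z)>0$ and $\AFld(z)=0$, we also have $\BFld(z)=0$, so $z$ is a fixed point of $\sact$, and the derivatives $D\sact_{s}(z)\in\mathrm{GL}(T_z\Mman)$ form a smooth linear representation of the compact group $S^1$ on $T_z\Mman$.

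Next I would apply the standard decomposition of real $S^1$-representations: averaging any inner product over $S^1$ yields an invariant one, and under it $T_z\Mman$ splits orthogonally as a direct sum of $2$-dimensional rotation summands $V_1,\dots,V_k$ with nonzero angular weights $B_1,\dots,B_k$, plus a trivial summand $W$ of dimension $l$, so that $2k+l=\dim\Mman$. In a basis adapted to this splitting, the infinitesimal generator $D\BFld(z)$ of the $S^1$-action on $T_z\Mman$ is block-diagonal with $2\times 2$ blocks $\bigl(\begin{smallmatrix}0&-B_i\\ B_i&0\end{smallmatrix}\bigr)$ and a zero block on $W$.

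Then I would transfer this linear picture to the manifold by choosing any smooth chart $(x_1,y_1,\dots,x_k,y_k,t_1,\dots,t_l)$ at $z=0$ whose differential at $z$ carries $\partial/\partial x_i,\partial/\partial y_i,\partial/\partial t_j$ to the chosen adapted basis. Because $\BFld(z)=0$, the first jet of $\BFld$ in these coordinates is literally the matrix $D\BFld(z)$, so it has the desired form; and from $\AFld=\tfrac{1}{\theta}\BFld$ together with $\AFld(z)=0$ I get $j^1_0\AFld=\tfrac{1}{\theta(z)}\,j^1_0\BFld$, which yields the conclusion with $A_i=B_i/\theta(z)\neq 0$.

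There is really no analytic obstacle here: Corollary~\ref{cor:persh--cicrleact} already does the hard work of producing the circle action, and the rest is the classical structure theorem for linear $S^1$-actions on real vector spaces. The only points requiring a little care are the invocation of an invariant inner product (to get orthogonal splitting and hence actual rotation blocks rather than just conjugacy to them), and the observation that at a common zero of $\AFld$ and $\BFld=\theta\AFld$ the one-jets are related simply by the positive scalar $\theta(z)$, so the nonvanishing of the $A_i$ is automatic from that of the $B_i$.
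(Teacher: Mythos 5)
Your proposal is correct and follows essentially the same route as the paper: reduce to $\BFld=\theta\AFld$ using Corollary~\ref{cor:persh--cicrleact}, observe that the differentials of the flow at the fixed point $z$ give a linear $U(1)$-representation on $T_z\Mman$, invoke the standard decomposition of such representations, and relate $j^1_z\AFld$ and $j^1_z\BFld$ by the positive scalar $\theta(z)$. The paper simply cites ``standard theorems'' where you spell out the averaging argument; the content is identical.
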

\begin{proof}
Let $\theta$ be the period function for $\AFld$ and $\BFld = \theta\AFld$.
Since $\theta>0$, it follows that $\singA=\singB$ and for every $z\in\singA$ we have that 
$$j^1_{z}\BFld = \theta(z)\cdot  j^1_{z}\AFld.$$
Therefore it suffices to prove our statement for $\BFld$.

By Corollary~\ref{cor:persh--cicrleact} $\BFld$ yields a circle action, i.e. $\BFlow_{1}=\id_{\Mman}$, where $\BFlow$ is the flow of $\BFld$.
Then $\BFlow$ yields a linear flow $T_{z}\BFlow_t$ on the tangent space $T_z\Mman$ such that $T_{z}\BFlow_1=\id$.
In other words we obtain a linear action (i.e. representation) of the circle group $U(1)$ in the finite-dimensional vector space $T_z\Mman$.
Now the result follows from standard theorems about presentations of $U(1)$.
\end{proof}

\begin{rem}\rm
Suppose that in Corollary~\ref{cor:j1_of_per_map} \ $\dim\Mman=2$.
Then we can choose local coordinates $(x,y)$ at $z=0\in\RRR^2$ in which  
$$j^1_{0}\AFld(x,y) = - y \frac{\partial }{\partial x} +  x\frac{\partial }{\partial y}.$$
For this case the normal forms of such vector fields are described in~\cite{Takens:AIF:1973}.
\end{rem}


\begin{thebibliography}{W:99}


\bibitem{Maks:Shifts}
S.\;Maksymenko,
\emph{Smooth shifts along trajectories of flows},
Topol. Appl., \textbf{130} (2003) 183--204, \texttt{arXiv:math/0106199}.

\bibitem{Maks:AGAG:2006}
S.\;Maksymenko, 
\emph{Homotopy types of stabilizers and orbits of Morse functions on surfaces},
Ann. Glob. Anal. Geom., \textbf{29} no. 3 (2006) 241--285, \texttt{arXiv:math/0310067}.


\bibitem{Maks:LocInvSh}
S.\;Maksymenko, 
\emph{Local inverses of shift maps along orbits of flows}, submitted, \texttt{arXiv:0806.1502}.

\bibitem{OrnsteinSmorodsky:IJM:1978}
D.\;S.\;Ornstein, M.\;Smorodinsky,
\emph{Continuous speed changes for flows}, 
Israel J. Math. \textbf{31} no.~2 (1978) 161-168.


\bibitem{Parry:JLMS:1972}
W.\; Parry, 
\emph{Cocycles and velocity changes},
J. London Math. Soc. (2) 5 (1972), 511-516.

\bibitem{Takens:AIF:1973}
F.\;Takens,
\emph{Normal forms for certain singularities of vectorfields},
Ann. Inst. Fourier \textbf{23} no.~2 (1973) 163-195.


\bibitem{Totoki:MFCKUS:1966}
H.\;Totoki,
\emph{Time changes of flows},
Memoirs Fac. Sci. Kyushu Univ. Ser. A. 20.1 (1966) 27-55.

\end{thebibliography}
\end{document}